\newtheorem{thm}{Theorem}
\newtheorem{cor}{Corollary}[section]
\newtheorem{dfn}[cor]{Definition}
\theoremstyle{definition}
\newtheorem{exl}[cor]{Example}
\newtheorem{rem}[cor]{Remark}
\def\Z{{\mathbb Z}}
\def\Sph{{\mathbb S}}
\def\Sym{\mathrm{Sym}}
\def\Aut{\mathrm{Aut}}
\def\So{\Sigma_{odd}}
\def\wS{\widetilde{\Sigma}}
\def\mod{\mathrm{mod}\ }
\def\Vert{\mathrm{Vert}}
\def\dev{\mathrm{dev}}
\title{Color or cover}
\author{Ivan Izmestiev}
\date{\today}
\thanks{Supported by the European Research Council under the European Union's Seventh Framework Programme (FP7/2007-2013)/\allowbreak ERC Grant agreement no.~247029-SDModels.}
\address{Institut f\"ur Mathematik \\
Freie Universit\"at Berlin \\
Arnimallee 2 \\
D-14195 Berlin \\
 GERMANY}
\curraddr{University of Fribourg Department of Mathematics\\ Chemin du Mus\'ee 23 \\ CH-1700 Fribourg P\'erolles \\ SWITZERLAND}
\email{ivan.izmestiev@unifr.ch}
\begin{document}

\begin{abstract}
If all but two vertices of a triangulated sphere have degrees divisible by $k$, then the exceptional vertices are not adjacent. This theorem is proved for $k=2$ with the help of the coloring monodromy. For $k = 3, 4, 5$ colorings by the vertices of platonic solids have to be used.
With a coloring monodromy one can associate a branched cover. This generalizes to a space of germs between two triangulated surfaces.

We also discuss relations with Belyi surfaces and with cone-metrics of constant curvature.
\end{abstract}

\maketitle

\section{Introduction}
One of the results of this article is a new proof of the following theorem of Steve Fisk.
\begin{thm}[Fisk \cite{Fisk73,Fisk78}]
\label{thm:OddNeighb}
If a triangulation of the $2$-sphere has exactly two vertices of odd degree, then these vertices are not adjacent.
\end{thm}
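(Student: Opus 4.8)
The plan is to attach to the hoped-for (but in general nonexistent) proper $3$-coloring a monodromy representation and to read off the parity of the vertex degrees from it. Fix the three colors as the set $\{1,2,3\}$ on which $\Sym(3)$ acts. On each triangle a proper coloring is a bijection of its vertices with $\{1,2,3\}$, and crossing an edge into a neighbouring triangle determines the coloring there uniquely: the two shared vertices keep their colors and the opposite vertex receives the remaining one. Developing this rule along paths in the dual graph produces, for every loop, a permutation of the colors; since crossing an edge and crossing back is the identity, this descends to a homomorphism $\rho\colon\pi_1(\Sph^2\setminus\So)\to\Sym(3)$, the coloring monodromy, where $\So$ is the set of odd-degree vertices.

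Next I would compute the local monodromy around a single vertex $v$ of degree $d$. Keeping the color of $v$ fixed while developing around its star, the $d$ neighbours are forced to alternate between the two remaining colors, which closes up consistently if and only if the link cycle has even length. Hence $\rho$ sends a small loop around $v$ to the identity when $d$ is even and to the transposition fixing the color of $v$ (and swapping the other two) when $d$ is odd. The topological input is the standard fact that on the sphere the product of the local monodromies over all punctures is trivial: choosing based loops $\gamma_1,\gamma_2$ around the two odd vertices $p_1,p_2$ with $\gamma_1\gamma_2\simeq 1$ in $\pi_1(\Sph^2\setminus\So)$ gives $\rho(\gamma_1)\rho(\gamma_2)=\mathrm{id}$, so the two transpositions must coincide.

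Finally I would base both loops in a triangle $p_1p_2q$ bordering the supposed edge $p_1p_2$ and color it by $p_1\mapsto 1$, $p_2\mapsto 2$, $q\mapsto 3$. Reading the local monodromies in this frame, the loop around $p_1$ fixes color $1$ while the loop around $p_2$ fixes color $2$, so the two transpositions are $(2\,3)$ and $(1\,3)$ respectively. These are distinct, and their product $(2\,3)(1\,3)$ is the $3$-cycle $(1\,2\,3)$, contradicting $\rho(\gamma_1)\rho(\gamma_2)=\mathrm{id}$. Thus $p_1$ and $p_2$ cannot be adjacent. The routine verifications are easy; the delicate points — and the main obstacle — are making the developing map and its monodromy genuinely well defined, pinning down the claim that the transposition around an odd vertex fixes exactly that vertex's own color, and choosing the basepoint and the two loops so that the relation $\gamma_1\gamma_2\simeq 1$ is read in precisely the frame in which the adjacency of $p_1$ and $p_2$ forces their colors to differ.
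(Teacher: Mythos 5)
Your proposal is correct and is essentially the paper's own second proof: you construct the same coloring monodromy $\pi_1(\Sph^2\setminus\{a,b\})\to\Sym_3$, identify the local monodromy around an odd vertex as the transposition fixing that vertex's color, and derive a contradiction from the fact that the two loops around adjacent odd vertices yield distinct, non-commuting transpositions while the fundamental group of the twice-punctured sphere is cyclic. The only cosmetic difference is that you phrase the contradiction via the relation $\gamma_1\gamma_2\simeq 1$ rather than via the nonexistence of an epimorphism $\Z\to\Sym_3$, which amounts to the same thing.
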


This statement looks quite surprising.
Indeed, from a double counting argument we know that every graph has an even number of vertices of odd degree. Of course, there exist abstract graphs with exactly two odd vertices, which are adjacent.
There also exist triangulated spheres with two non-adjacent vertices of odd degree, or with four pairwise adjacent odd vertices, see Figure \ref{fig:OddVert}. But nothing can bring together two single odd vertices.

\begin{figure}[ht]
\centering
\includegraphics{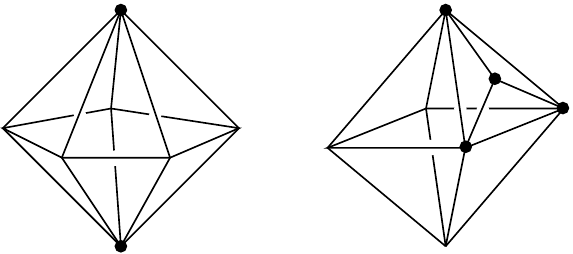}
\caption{Some possible configurations of vertices of odd degree.}
\label{fig:OddVert}
\end{figure}

The proof of Theorem \ref{thm:OddNeighb} given in \cite{Fisk78} is a straightforward combination of a lemma from \cite{Fisk73} with the four-color theorem (whose proof appeared in the time between \cite{Fisk73} and \cite{Fisk78}). The proofs presented in this article are much simpler; one of them uses the ``even obstruction map'' introduced by Fisk in \cite{Fisk77}. We prefer to use a different term ``coloring monodromy''.

By using a generalization (also due to Fisk \cite{Fisk77}) of the coloring monodromy, we prove the following generalization of Theorem \ref{thm:OddNeighb}.

\begin{thm}
\label{thm:kVert}
Let $k$ be a positive integer. If a triangulation of the $2$-sphere has exactly two vertices whose degrees are not divisible by $k$, then these vertices are not adjacent.
\end{thm}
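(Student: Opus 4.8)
The plan is to realize Fisk's generalized coloring monodromy as the holonomy of a cone-metric and then read off adjacency from the failure of two rotations to cancel. First I would metrize the triangulated sphere $S$ by declaring every triangle to be a copy of the equilateral triangle all of whose angles equal $2\pi/k$; since all edges then have equal length the triangles glue isometrically, and $S$ becomes a cone-surface whose model geometry $X_k$ is the sphere for $k<6$, the Euclidean plane for $k=6$, and the hyperbolic plane for $k>6$ (the three regular tilings $\{3,k\}$). The cone angle at a vertex $v$ of degree $d$ is $2\pi d/k$, so $v$ is a smooth point exactly when $k\mid d$. Developing this metric over the universal cover of $S\setminus \Vert(S)$ produces the monodromy homomorphism $\rho\colon \pi_1(S\setminus \Vert(S))\to \mathrm{Isom}^+(X_k)$. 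For $k=2$ this recovers the coloring monodromy with values in $\Sym$ of the three colours, and for $k=3,4,5$ the platonic colorings; for $k\ge 6$ the development is a genuine (branched) cover of a Euclidean or hyperbolic tiling, which is the ``color or cover'' alternative of the title.

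The local computation I need is that for a small loop $\gamma_v$ around $v$ the image $\rho(\gamma_v)$ is a rotation about the developed image of $v$ by the cone angle $2\pi d/k$. Hence $\rho(\gamma_v)$ is the identity precisely when $k\mid d$, and a nontrivial elliptic isometry otherwise. Now suppose $v_1,v_2$ are the only two vertices whose degree is not divisible by $k$ and, seeking a contradiction, that they are joined by an edge. Choosing the basepoint inside a triangle $T_0$ containing this edge, the developed images $p_1,p_2$ of $v_1,v_2$ are the endpoints of a developed copy of the edge, hence distinct points of $X_k$ at distance equal to the positive edge length; in particular $\rho(\gamma_{v_1})$ and $\rho(\gamma_{v_2})$ are nontrivial rotations about the distinct centers $p_1,p_2$.

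The topology of the sphere then forces these two rotations to cancel. In $\pi_1(S\setminus \Vert(S))$ the product of all vertex loops is trivial, so $\rho(\gamma_{v_1})\rho(\gamma_{v_2})\prod_{v\neq v_1,v_2}\rho(\gamma_v)=1$ for a suitable choice of generators; since $\rho(\gamma_v)=1$ for every $v\neq v_1,v_2$, a product of two nontrivial rotations about the distinct centers $p_1,p_2$ would be the identity. This is impossible in every $X_k$: if $R_1R_2=1$ then $R_2=R_1^{-1}$ fixes the fixed-point set of $R_1$, contradicting that $R_2$ fixes $p_2\neq p_1$ (on the sphere one uses in addition that the edge length lies below the diameter, so $p_2\neq -p_1$). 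This contradiction shows that $v_1$ and $v_2$ cannot be adjacent.

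I expect the main obstacle to be the careful construction and bookkeeping of the generalized monodromy: verifying that developing the cone-metric of an oriented surface lands in $\mathrm{Isom}^+(X_k)$, that the vertex holonomy is exactly the rotation by the cone angle, and that the basepoint and conjugating paths can be arranged so that $\rho(\gamma_{v_1})$ and $\rho(\gamma_{v_2})$ are genuinely rotations about the two endpoints of a single developed edge. Once this is in place the argument is uniform across the spherical, Euclidean and hyperbolic cases, the only case-sensitive point being the antipodal subtlety on $\Sph^2$, which the edge-length bound disposes of.
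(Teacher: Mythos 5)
Your argument is correct, and it coincides in substance with the paper's \emph{second} proof of this theorem (the ``geometric aspect'' of Section~\ref{sec:BrCov}): there too each triangle is made spherically equilateral with angle $\frac{2\pi}{k}$ and the developing map and holonomy of the resulting cone-metric do the work. The differences are worth recording. The paper's primary proof is purely combinatorial (the platonic monodromy into $\Aut(\Sigma')$, whose image would have to be both non-abelian and cyclic); its geometric proof cuts the sphere along the edge $ab$, develops the slit surface, and derives the contradiction from the two sides of the slit developing onto two \emph{distinct} geodesic arcs of length $a_k\le\frac{2\pi}{3}$ with the same non-antipodal endpoints. You instead keep the vertex holonomies and observe that two nontrivial rotations about distinct, non-antipodal centers cannot be mutually inverse, since a rotation and its inverse share a fixed-point set --- an equivalent fact, phrased perhaps more cleanly, and your product relation is just the statement that $\rho$ factors through $\pi_1(\Sph^2\setminus\{v_1,v_2\})\cong\Z$, forcing $\rho(\gamma_{v_2})$ to be a power of $\rho(\gamma_{v_1})$. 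Your version also has the small virtue of treating $k\ge 6$ uniformly via the Euclidean and hyperbolic tilings $\{3,k\}$, where the paper dismisses those cases by the count $\sum_v d_v = 6n-12$. One slip of terminology: a vertex of degree $d$ is a \emph{smooth} point only when $d=k$ (cone angle exactly $2\pi$); what you need, and what you in fact use, is that its holonomy is trivial exactly when $k\mid d$. The bookkeeping you flag at the end --- that for loops based in a triangle containing the edge and supported in the respective vertex stars, $\rho(\gamma_{v_1})$ and $\rho(\gamma_{v_2})$ are rotations about the two endpoints of the developed edge --- is standard and poses no real obstacle.
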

The case $k \ge 6$ is trivial: the sum of the vertex degrees of a triangulated sphere with $n$ vertices equals $6n-12$, and having all but two vertex degrees multiples of $k$ would lead to a larger sum.
Thus only the cases $k = 2, 3, 4, 5$ are of interest.

Theorem \ref{thm:OddNeighb} implies the non-planarity of every graph on $n$ vertices with $3n-6$ edges and with exactly two vertices of odd degree, which are adjacent. However, the simplest such graphs are just vertex-splittings of $K_5$.

The article is organized as follows. In Section \ref{sec:ColPol} an elementary proof of Theorem \ref{thm:OddNeighb} is given. It uses vertex colorings of even triangulations of simply-connected surfaces. In Section \ref{sec:3Col} we define the \emph{coloring monodromy} which provides a more formal framework, suitable for generalizations. One of these is the \emph{platonic monodromy}, which leads to a proof of Theorem \ref{thm:kVert} in Section \ref{sec:PlatMon}.

Section \ref{sec:BrCov} contains further developments of the notions introduced in Section \ref{sec:ColMon}. We discuss the branched covers defined by the coloring monodromies, mention a relation to the Belyi surfaces, and give a geometric version of the proof of Theorem \ref{thm:kVert}, based this time on the developing map of a spherical cone-surface on the sphere.

\section{The coloring monodromy}
\label{sec:ColMon}
\subsection{Even triangulations of polygons}
\label{sec:ColPol}
Assume we succeeded to find a triangulation of the sphere with exactly two odd degree vertices, which are neighbors. Remove the edge joining the odd vertices, together with the two triangles on both sides of it. We obtain a triangulation of a sphere with a quadrangular hole, where all vertices, interior as well as those on the boundary, are of even degree. By stretching it to the plane, we obtain a triangulation of the square.
Thus it suffices to show that there is no triangulation of the square with all vertices of even degree.

Let us study a more general problem: {\em for which $n$ can an $n$-gon be triangulated with all vertices of even degrees?} One tries in vain to do this with the quadrilateral and the pentagon, finds an even triangulation for a hexagon, and comes up with a construction that produces an even triangulation of an $(n+3)$-gon from one for an $n$-gon, see Figure \ref{fig:NgonEven}.

\begin{figure}[ht]
\centering
\input{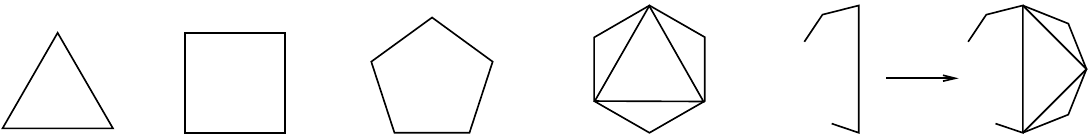_t}
\caption{Even triangulations of $n$-gons.}
\label{fig:NgonEven}
\end{figure}

\begin{thm}
\label{thm:ColorPolygon}
There is a triangulation of an $n$-gon (vertices in the interior allowed, vertices on the sides forbidden) with all vertices of even degree if and only if $n$ is divisible by $3$.
\end{thm}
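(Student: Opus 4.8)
The plan is to prove both implications, with the substance lying in the ``only if'' direction. For the ``if'' direction I would argue by induction on $n$, using the explicit construction of Figure \ref{fig:NgonEven} that turns an even triangulation of an $n$-gon into one of an $(n+3)$-gon; the base case $n=3$ is the single triangle, all of whose vertices have degree $2$. This already produces even triangulations for every multiple of $3$.

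For the ``only if'' direction, suppose we are given an even triangulation $T$ of the $n$-gon. The key step is to properly $3$-color its vertices, meaning to assign colors from a set of three so that each triangle receives all three. I would obtain such a coloring by propagation: fix the colors on one triangle, and extend across each edge by the rule that the new vertex takes the color absent from the shared edge. The only way the color of a vertex $w$ can be revisited is by going once around $w$ through its fan of triangles; there the propagated colors alternate between the two colors different from that of $w$, so they close up consistently exactly when the degree of $w$ is even. Since $T$ is even and the $n$-gon is simply connected, any two propagation paths between two triangles are related by such elementary moves around interior vertices, so the coloring is globally well-defined.

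With a rainbow $3$-coloring in hand, I would read off the constraint on the boundary. Let $v_1, \dots, v_n$ be the boundary cycle. The link of a boundary vertex $v_i$ is a path whose two endpoints are $v_{i-1}$ and $v_{i+1}$; along this path the colors alternate between the two colors $\ne c(v_i)$, so the endpoints share a color iff the degree of $v_i$ is odd. As every boundary degree is even, $c(v_{i-1}) \ne c(v_{i+1})$ for all $i$. Together with $c(v_i) \ne c(v_{i+1})$, this makes every three consecutive boundary vertices tricolored, so $c(v_{i+1})$ is forced to be the color missing from $\{c(v_{i-1}), c(v_i)\}$. Hence the boundary coloring is periodic of period exactly $3$, and closing it up around the cycle of length $n$ requires $3 \mid n$.

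The main obstacle is the $3$-colorability step: one must check that the local consistency guaranteed by the even interior degrees really does globalize over the whole disk. This is precisely where simple-connectivity enters, and it is the conceptual core that the later ``coloring monodromy'' formalizes; the boundary count in the last paragraph is then routine.
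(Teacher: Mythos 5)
Your proposal is correct and follows essentially the same route as the paper: propagate a $3$-coloring from one triangle using evenness of interior degrees plus simple connectivity, then use evenness of the boundary degrees to force the boundary colors to cycle with period $3$, and handle the converse by the inductive $n \to n+3$ construction. Your link-of-a-boundary-vertex argument is just a more explicit justification of the paper's one-line claim that the boundary colors repeat cyclically.
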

\begin{proof}
It is a part of folklore that the vertices of an even triangulation of an $n$-gon can be properly colored in three colors. Indeed, a coloring of a triangle induces colorings of all adjacent triangles. Thus we have little choice but to color one of the triangles and extend this coloring along all sequences of triangles where consecutive triangles share an edge. Extensions along different paths ending at the same triangle will agree because the degrees of all interior vertices are even: sliding a path over a vertex of even degree does not change the colors at its terminal triangle.

\begin{figure}[ht]
\centering
\input{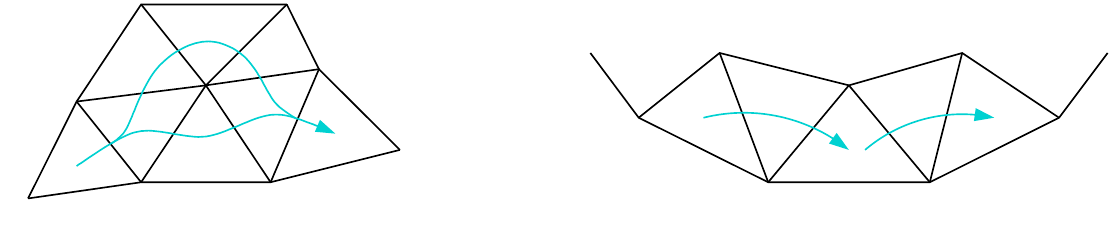_t}
\caption{Coloring an even triangulation of an $n$-gon.}
\end{figure}

Now look at the colors assigned to the corners of the $n$-gon. Since all boundary vertices have even degrees, the colors on the boundary repeat cyclically $1 \to 2 \to 3 \to 1 \to \cdots$. It follows that $n$ is divisible by $3$.

To triangulate $n$-gons for $n$ divisible by $3$, use the inductive construction shown on Figure \ref{fig:NgonEven}.
\end{proof}

\subsection{Coloring in three colors}
\label{sec:3Col}
Let $\Sigma$ be a triangulation of a surface, possibly with boundary.
Try to color in three colors the vertices of $\Sigma$ so that neighbors have different colors. As in the proof of Theorem \ref{thm:ColorPolygon}, we can start by coloring the vertices of an arbitrary triangle and extend the coloring along every sequence of triangles where consecutive triangles share an edge. In general, the coloring of the last triangle depends on the choice of a sequence. Two homotopic sequences induce the same coloring if the homotopy doesn't pass through vertices of odd degree.

In particular, we can consider only closed paths starting and ending at a triangle $\sigma_0$ and look how the extension along these paths permutes the colors.

\begin{dfn}
\label{dfn:ColMon}
Let $\Sigma$ be a triangulation of a surface, possibly with boundary, and let $a_1, \ldots, a_n$ be all interior vertices of odd degree. Pick a triangle $\sigma$ and fix a \emph{coloring} of its vertices, that is a bijection between its vertex set and the set $\{1,2,3\}$. The group homomorphism
\[
\pi_1(|\Sigma| \setminus \{a_1, \ldots, a_n\}, \sigma) \to \Sym_3
\]
that maps every path to the associated recoloring of the vertices of $\sigma_0$ is called the \emph{coloring monodromy}.
\end{dfn}

More generally, the coloring monodromy of a triangulated $d$-dimensional manifold is a homomorphism
\[
\pi_1(|\Sigma| \setminus |\Sigma_{odd}|), \sigma) \to \Sym_{d+1}
\]
where $\Sigma_{odd}$ is the odd locus of the triangulation, see Section \ref{sec:HighDim}.
Clearly, a triangulation is vertex-colorable if and only if its coloring monodromy is trivial. The coloring monodromy was introduced in \cite{Fisk77} (the even obstruction map) and rediscovered in \cite{J02} (the group of projectivities).

\begin{proof}[Second proof of Theorem \ref{thm:OddNeighb}]
Assume we have a triangulation of the sphere with two adjacent odd degree vertices $a$ and $b$. Let $\sigma$ be one of the triangles containing $a$ and $b$; color the vertices $a$ and $b$ with the colors $1$ and $2$, respectively. Then the path going around $a$ exchanges the colors $2$ and $3$, and the path around $b$ exchanges $1$ and $3$.

It follows that the image of the coloring monodromy
\[
\pi_1(\Sph^2 \setminus \{a, b\}) \to \Sym_3
\]
is the whole group $\Sym_3$. On the other hand, $\pi_1(\Sph^2 \setminus \{a, b\}) \cong \Z$. Since there is no epimorphism from the group $\Z$ onto $\Sym_3$, there is no triangulation with the assumed properties.
\end{proof}

Note that the torus and the projective plane can be triangulated with two adjacent odd vertices, see Figure \ref{fig:TorProjOdd}. The above proof of Theorem \ref{thm:OddNeighb} gives a reason why this can happen: the fundamental groups of doubly punctured torus or projective plane is large enough to allow an epimorphism onto $\Sym_3$.

\begin{figure}[ht]
\centering
\input{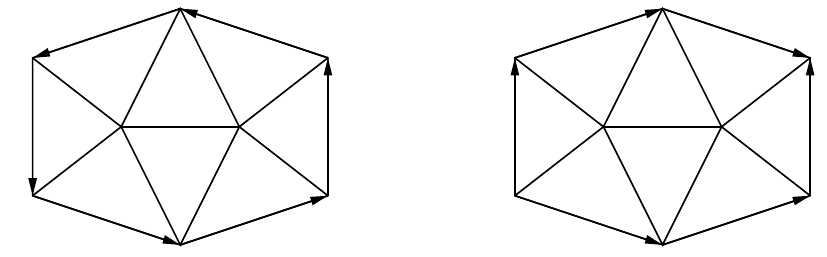_t}
\caption{Triangulations of the projective plane and of the torus with two odd adjacent vertices.}
\label{fig:TorProjOdd}
\end{figure}

\subsection{Colorings by the vertices of platonic solids}
\label{sec:PlatMon}
The above arguments were based on the observation that a simply connected triangulated surface with all vertices of even degrees has a proper vertex coloring in three colors. For Theorem \ref{thm:kVert} we need a new notion of a proper coloring adapted to the case when all vertex degrees are divisible by $k$, for $k = 3, 4, 5$.

Let $\Sigma'$ be the boundary complex of the tetrahedron, octahedron, or icosahedron for $k = 3$, $4$, or $5$, respectively. The vertices of $\Sigma'$ will be our colors; that is, a coloring of $\Sigma$ associates to each vertex of $\Sigma$ a vertex of $\Sigma'$. A coloring of $\Sigma$ is called \emph{proper}, if the following two conditions are satisfied.
\begin{enumerate}
\item
The vertices of any triangle of $\Sigma$ are bijectively colored by the vertices of some triangle of $\Sigma'$.
\item
The vertices of any two adjacent triangles of $\Sigma$ are bijectively colored by the vertices of some two adjacent triangles of $\Sigma'$.
\end{enumerate}

The properties (1) and (2) ensure that a coloring of a base triangle $\sigma$ can be uniquely extended along a path of adjacent triangles. If the surface is simply-connected and all of its interior vertices have degrees divisible by $k$, then extensions along different paths agree, and we obtain a proper coloring of the whole surface.

In general, going along a closed path results in a recoloring of the vertices of the initial triangle: starting with $\phi_0 \colon \sigma \to \sigma'$ we end up with $\phi_1 \colon \sigma \to \sigma''$. The composition $\phi_1 \circ \phi_0^{-1} \colon \sigma' \to \sigma''$ has a unique extension to an automorphism of the regular polyhedron $\Sigma'$. (The automorphism group of $\Sigma'$ is flag-transitive.)

Thus, similarly to Definition \ref{dfn:ColMon} we have the following construction, which is due to Fisk \cite[Section I.5]{Fisk77}.

\begin{dfn}
Let $k \in \{3, 4, 5\}$, and let $\Sigma'$ be the boundary complex of the tetrahedron, octahedron, or icosahedron, respectively.

Let $\Sigma$ be a triangulated surface, and let $a_1, \ldots, a_n$ be all its interior vertices whose degrees are not divisible by $k$. Pick a triangle $\sigma$ of $\Sigma$ and fix a \emph{coloring} of its vertices, that is a bijection between its vertex set and the vertex set of some triangle of $\Sigma'$. The group homomorphism
\[
\pi_1(M \setminus \{a_1, \ldots, a_n\}, \sigma) \to \Aut(\Sigma')
\]
that maps every path to the automorphism of $\Sigma'$ associated with the recoloring of the vertices of $\Delta_0$ is called the \emph{platonic monodromy}.
\end{dfn}

Intuitively, the platonic monodromy arises from ``rolling'' the platonic solid $\Sigma'$ over the simplicial surface $\Sigma$.

\begin{proof}[Proof of Theorem \ref{thm:kVert}]
Assume we have a triangulation of the sphere with two adjacent vertices $a$ and $b$ whose degrees are not divisible by $k$, the degrees of all other vertices being multiples of $k$. Let $\sigma$ be one of the triangles containing $a$ and $b$. Then the two closed paths, one going around $a$ and the other going around $b$ generate two non-commuting automorphisms of $\Sigma'$. Hence the image of the $\Sigma'$-coloring monodromy is a non-abelian group. On the other hand, since $\pi_1(\Sph^2 \setminus \{a,b\}) \cong \Z$, the image must be a cyclic group. The contradiction shows that such a triangulation doesn't exist.
\end{proof}

\begin{rem}
Theorem \ref{thm:kVert} can be strengthened: for $k > 2$ the two exceptional vertices not only can't be adjacent, but also can't lie across an edge. More generally, rolling $\Sigma'$ from one exceptional vertex to the other must color the second vertex in the color of the first.
\end{rem}

\begin{rem}
The case $k=3$ can be described as coloring vertices of $\Sigma$ in $4$ colors so that both the neighbors and the vertices lying across an edge have different colors. For $k=4$ one colors the vertices in $6$ colors imitating the numbers on the dice: the colors at the vertices lying across an edge must add up to $7$. For $k=5$ a special arrangement of $12$ colors is used, which is not so easy to describe explicitely.
\end{rem}

\section{Branched covers}
\label{sec:BrCov}
\subsection{The minimal vertex-colored branched cover}
\label{sec:Unfolding}
The coloring monodromy (Definition \ref{dfn:ColMon}) not only detects whether a triangulation is vertex-colorable, but also allows to associate with every non-colorable triangulation a colorable branched cover. Namely, assume that some closed path forces us to recolor the vertices of a triangle. Then, instead of changing the colors, start a new layer of triangles. See Figure \ref{fig:BrCov} for an illustration.

\begin{figure}[ht]
\centering
\input{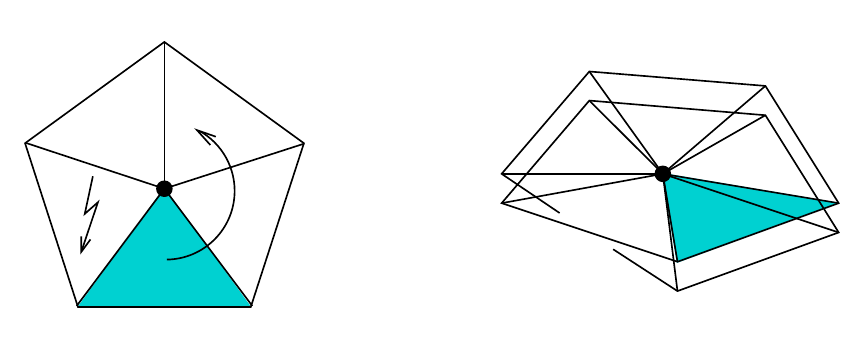_t}
\caption{Non-trivial coloring monodromy generates a branched cover.}
\label{fig:BrCov}
\end{figure}

Applying this procedure to all paths produces a branched cover over the triangulated surface, ramified over the vertices of odd degree.
It was introduced in \cite{IJ03}.

\begin{dfn}
\label{dfn:Unfold}
Let $\Sigma$ be a triangulation of a surface $M$. Consider the set of colored triangles of $\Sigma$:
\[
\{(\sigma, \phi) \mid \sigma \text{ a triangle of }\Sigma,\ \phi \colon \Vert(\sigma) \to \{1,2,3\} \text{ a bijection}\}
\]
If two triangles $\sigma_1$ and $\sigma_2$ share an edge $\tau$, and the colorings $\phi_1$ and $\phi_2$ agree on $\tau$, then glue $(\sigma_1, \phi_1)$ to $(\sigma_2, \phi_2)$ along $\tau$.
The resulting simplicial complex $\widetilde{\Sigma}$ is called the \emph{unfolding} of $\Sigma$.
\end{dfn}

There is a canonical projection $\wS \to \Sigma$.
If the unfolding is disconnected, then all of its components are isomorphic to each other.

\begin{exl}
The unfolding of the boundary of the simplex is a triangulation of the torus. The projection $\wS \to \Sigma$ is a six-fold branched cover, where every vertex has three preimages of index $2$. See \cite[Figure 3]{IJ03}.
\end{exl}

If all vertices of $\Sigma$ have even degree, then $\wS \to \Sigma$ is a usual (non-branched) covering, which is non-trivial if and only if there is a homotopically non-trivial loop that permutes the colors.

\begin{exl}
In the $7$-vertex triangulation of the torus all vertices have degree $6$. Generators of the fundamental group recolor the vertices of a triangle in a cyclic way, see Figure \ref{fig:7Torus}. Therefore the unfolding is a vertex-colorable triangulation on 21 vertices triply covering the $7$-vertex triangulation.
\end{exl}

\begin{figure}[ht]
\centering
\includegraphics{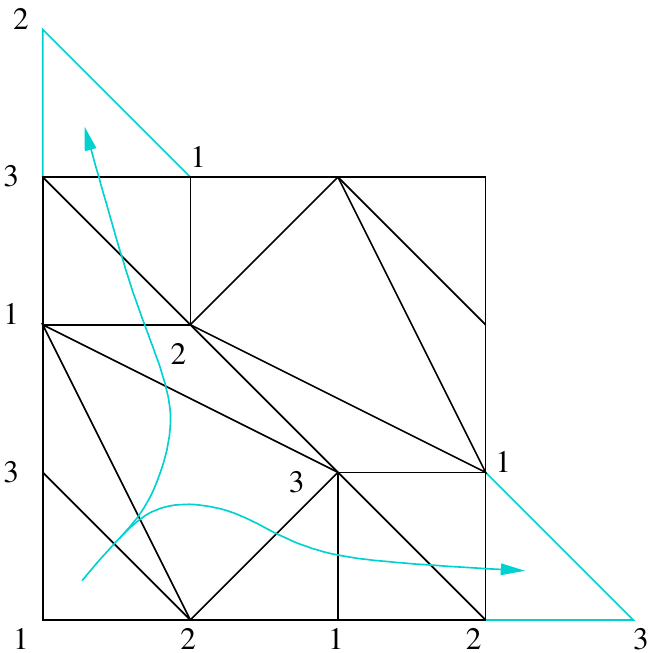}
\caption{The coloring monodromy of the $7$-vertex torus triangulation.}
\label{fig:7Torus}
\end{figure}


\begin{thm}
Every even triangulation of the torus is either vertex-colorable or can be cut along a simple closed curve so that to become vertex-colorable.
\end{thm}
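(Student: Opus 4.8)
The plan is to exploit that an even triangulation has no vertices of odd degree, so that the coloring monodromy of Definition~\ref{dfn:ColMon} is defined on the whole fundamental group. Let $\Sigma$ be an even triangulation of the torus and let
\[
\rho \colon \pi_1(|\Sigma|) \cong \Z^2 \to \Sym_3
\]
be its coloring monodromy. Since $\Z^2$ is abelian, the image of $\rho$ is an abelian subgroup of $\Sym_3$; the only abelian subgroups of $\Sym_3$ have order $1$, $2$, or $3$, so the image is cyclic. If the image is trivial then $\Sigma$ is vertex-colorable, because a triangulation is colorable if and only if its coloring monodromy is trivial, and there is nothing to prove. So I assume from now on that the image is cyclic of prime order $p \in \{2,3\}$.

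Set $K = \ker \rho$. Then $\Z^2/K \cong \Z/p$ is cyclic, so by the Smith normal form I can choose a basis $\alpha, \beta$ of $\Z^2$ with $\alpha \in K$ (hence $\alpha$ primitive) and with $\rho(\beta)$ generating the image. I would like to realize $\alpha$ by a simple closed curve $\gamma$ lying in the $1$-skeleton of $\Sigma$. Granting this, $\gamma$ is non-separating since its class is nonzero, and cutting $\Sigma$ along $\gamma$ yields a triangulated annulus $A$ whose interior vertices are precisely the interior vertices of $\Sigma$ away from $\gamma$---all of even degree---while the vertices of $\gamma$ are doubled onto $\partial A$.

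It then remains to check that $A$ is vertex-colorable, and here the monodromy does the work. The core of $A$ represents $\alpha \in K$, so $\pi_1(A) = \langle \alpha \rangle$ is sent trivially by $\rho$; thus the coloring monodromy of $A$, which is just the restriction of $\rho$, is trivial, and by the colorability criterion $A$ is vertex-colorable. This proves the theorem modulo the realization of $\alpha$.

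The hard part is exactly that realization: I must produce $\gamma$ inside the $1$-skeleton, so that cutting gives an honest triangulated annulus rather than a surface with triangles sliced through. The plan is to pass to the connected $p$-fold colorable cover $\hat\Sigma \to \Sigma$ determined by $K$ (a component of the unfolding $\wS$), which is again a triangulated torus carrying the free deck action of $\Z^2/K \cong \Z/p$. On $\hat\Sigma$ I would look for a simple closed edge-curve $\hat\gamma$ whose $p$ deck-translates are pairwise disjoint and cut $\hat\Sigma$ into $p$ fundamental annuli; its image $\gamma$ in $\Sigma$ is then an embedded non-separating edge-curve in the class $\alpha$, and $A$ is identified with one fundamental domain, inheriting the proper coloring of $\hat\Sigma$. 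The existence of such an equivariant edge-curve for a free cyclic action on a triangulated torus is the one genuinely combinatorial point; I expect to settle it by lifting to the universal cover and building a $T_\alpha$-invariant simple edge-path following the direction of $\alpha$, but this is the step where the argument needs the most care.
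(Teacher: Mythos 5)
Your group-theoretic core is exactly the paper's: the coloring monodromy $\rho \colon \pi_1(T) \cong \Z^2 \to \Sym_3$ has abelian, hence cyclic, image; one finds a primitive element $\alpha \in \ker\rho$ (the paper checks that one of $e_1, e_2, e_1 \pm e_2$ works, your Smith normal form argument is an equivalent way to say the same thing); and one cuts along a curve in the class $\alpha$, the resulting annulus having trivial monodromy and therefore being vertex-colorable. Up to that point your proposal is correct and coincides with the published proof, which is itself only a two-line sketch.

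The genuine gap is the one you flag yourself: realizing $\alpha$ by a simple closed curve \emph{in the $1$-skeleton}. As planned, this step is unlikely to go through. The $1$-skeleton of a fixed triangulation contains only finitely many embedded essential cycles, hence realizes only finitely many primitive homotopy classes, and there is no a priori reason that any of them lies in the index-$2$ or index-$3$ subgroup $\ker\rho$; the equivariant construction in the colorable cover runs into the same obstruction, since disjointness of the $p$ translates again forces the curve into a very restricted finite family. The way out is to notice that the theorem does not ask for an edge-curve. Realize $\alpha$ by an embedded curve $\gamma$ transverse to the triangulation and disjoint from the vertices (possible because $\alpha$ is primitive: take a straight line in a flat model of the torus, or put any embedded representative into normal position, meeting each triangle in at most one arc). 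Cutting along $\gamma$ severs exactly the edges that $\gamma$ crosses; the complementary annulus deformation retracts onto the dual graph of the remaining triangle adjacencies, its fundamental group is generated by $\alpha \in \ker\rho$, and the usual coloring-extension argument (all vertices are even, the monodromy of the annulus is trivial) yields a coloring of the original vertices that is proper on every uncut edge. This is the reading under which the paper's sketch, and your argument with the realization step removed rather than strengthened, become a complete proof.
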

\begin{proof}
It suffices to find a primitive element of $\Z^2 \cong \pi_1(T)$ with the trivial coloring monodromy. There is at least one such among $e_1, e_2, e_1 \pm e_2$, where $(e_1, e_2)$ is a basis of $\Z^2$.
\end{proof}


\subsection{Belyi surfaces}
A \emph{Belyi function} is a holomorphic map from a compact Riemann surface $M$ to $\mathbb{C}P^1$ ramified over three points $0, 1, \infty$. Subdivide $\mathbb{C}P^1$ in two triangles with vertices $0, 1, \infty$ and color one of the triangles white and the other black. This induces a triangulation of $M$ with the vertices colored with $0$, $1$, and $\infty$, and the triangles colored white and black. For more details see \cite{LZ04}.

Note that the existence of a vertex-coloring does not imply the existence of a face-coloring, and vice versa, see Figure \ref{fig:VertFaceCol}.

\begin{figure}[ht]
\centering
\includegraphics{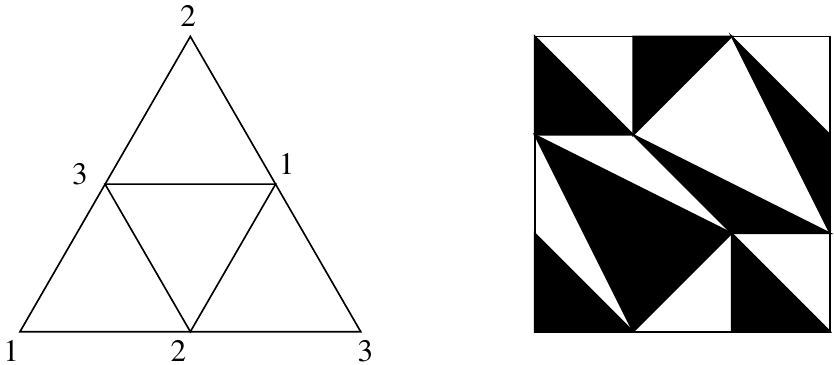}
\caption{Vertex-colored but not face-colored projective plane; face-colored but not vertex-colored torus.}
\label{fig:VertFaceCol}
\end{figure}

\begin{thm}
A vertex-colorable triangulation is face-colorable if and only if the underlying surface is oriented.

An even triangulation of an orientable surface is face-colorable if and only if the image of the coloring monodromy is either trivial or is generated by a $3$-cycle.
\end{thm}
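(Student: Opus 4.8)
The plan is to treat the two statements separately, in both cases exploiting the correspondence between a vertex $3$-coloring and a choice of cyclic orientation on each triangle. For the first statement, I would fix a proper vertex-coloring $\phi$ with values in $\{1,2,3\}$ and an orientation of the underlying surface, and call a triangle \emph{white} if the orientation induces the cyclic color order $(1,2,3)$ on its vertices and \emph{black} if it induces $(1,3,2)$. The decisive local observation is that two triangles sharing an edge are forced by properness to assign the same color to their opposite vertices, while the boundary orientations restrict to opposite orientations of the shared edge; unwinding this shows that adjacent triangles always receive opposite labels. Hence an orientation produces a face $2$-coloring, giving the implication oriented $\Rightarrow$ face-colorable. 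For the converse, I would start from a face $2$-coloring and orient each white triangle so that its colors read $(1,2,3)$ and each black triangle so that they read $(1,3,2)$; the same local computation, read in reverse, shows that these orientations agree across every shared edge and hence glue to a global orientation of $M$.

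For the second statement I would keep the $\{\pm1\}$-valued \emph{type} $\epsilon(\sigma,\phi)$ of a colored triangle, equal to $+1$ for white and $-1$ for black, but now allow only a local coloring extended along paths. This is legitimate because the triangulation is even, so the coloring monodromy $\rho\colon\pi_1(M)\to\Sym_3$ is defined on the whole fundamental group. Two elementary facts drive the computation. First, crossing an edge flips $\epsilon$; this is exactly the local lemma from the first part. Second, applying a permutation $\pi\in\Sym_3$ to the colors multiplies $\epsilon$ by $\mathrm{sgn}(\pi)$, since the even permutations preserve the cyclic order and the transpositions reverse it. Now take a closed dual path $\gamma=(\sigma_0,\dots,\sigma_m=\sigma_0)$ crossing $m$ edges and extend $\phi$ along it. The first fact gives $\epsilon(\sigma_m)=(-1)^m\,\epsilon(\sigma_0)$, whereas the coloring returned to $\sigma_0$ differs from the original one by the permutation $\rho(\gamma)$, so the second fact gives $\epsilon(\sigma_m)=\mathrm{sgn}(\rho(\gamma))\,\epsilon(\sigma_0)$. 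Comparing the two expressions yields the identity $(-1)^m=\mathrm{sgn}(\rho(\gamma))$.

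It remains to turn this into a statement about face-colorings. A triangulation is face-colorable precisely when its dual graph is bipartite, that is, when every closed dual walk has even length. Because the triangulation is even, length modulo $2$ is a well-defined homomorphism $\ell\colon\pi_1(M)\to\Z/2$ (each defining relation of $\pi_1(M)$ corresponds to the dual cycle encircling a vertex, whose length equals the even degree of that vertex), and the identity above identifies $\ell$ with $\mathrm{sgn}\circ\rho$. Therefore face-colorability is equivalent to the vanishing of $\mathrm{sgn}\circ\rho$, i.e.\ to $\mathrm{im}(\rho)\subseteq A_3$. The only subgroups of $A_3\cong\Z/3$ are the trivial group and $A_3$ itself, so this is exactly the condition that the image of the coloring monodromy is trivial or generated by a $3$-cycle.

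I expect the main obstacle to be the bookkeeping in the local lemma underlying the first fact: one has to keep careful track of how the boundary orientations of two adjacent triangles restrict to their common edge and how this interacts with the cyclic order of the three colors. Once the sign identity $(-1)^m=\mathrm{sgn}(\rho(\gamma))$ is established, the remaining steps are formal. A secondary point that needs care is the verification that length modulo $2$ genuinely descends to a homomorphism on $\pi_1(M)$, which is where evenness of the triangulation is used and which is what lets the condition that all dual loops have even length be detected from the image of $\rho$.
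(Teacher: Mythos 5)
Your proof is correct. The first part is essentially the paper's argument: the bijection between face $2$-colorings and orientations via the cyclic orders $(1,2,3)$ and $(1,3,2)$, together with the local check that adjacent triangles induce opposite orientations on their shared edge and color their opposite vertices identically. For the second part you take a genuinely different route. The paper colors the faces of the unfolding $\widetilde{\Sigma}$ (which is vertex-colored and oriented, hence face-colorable by the first part) and observes that the hypothesis on the monodromy makes this face-coloring constant on fibers, so it descends to $\Sigma$; as written this only addresses the ``if'' direction. You instead establish the sign identity $(-1)^m=\mathrm{sgn}(\rho(\gamma))$ for a closed dual walk of length $m$, which identifies length mod $2$ of dual walks with $\mathrm{sgn}\circ\rho$ and reduces face-colorability (bipartiteness of the dual graph) to $\mathrm{im}(\rho)\subseteq A_3$, i.e.\ to the image being trivial or generated by a $3$-cycle. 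This buys you both implications in one stroke and makes explicit where orientability enters (the type $\epsilon$ is only well defined on an oriented surface), at the cost of the bookkeeping you acknowledge; the paper's unfolding argument is shorter and reuses machinery already built, but leaves the converse implicit. One small remark: you do not actually need to verify separately that length mod $2$ descends to a homomorphism on $\pi_1(M)$ --- your identity $(-1)^m=\mathrm{sgn}(\rho(\gamma))$ already shows that the parity of a closed dual walk depends only on its class in $\pi_1(M)$, since $\rho$ does.
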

\begin{proof}
A vertex- and face-colored triangulation can be oriented by choosing the $(123)$-orientation for every white and the $(132)$-orientation for every black triangle. Vice versa, if a vertex-colored triangulation is oriented, then we can color every triangle with orientation $(123)$ white and every triangle with orientation $(132)$ black.

For the second part, color the faces of the unfolding $\wS$. The assumption on the monodromy implies that the faces of $\wS$ with the same image in $\Sigma$ have the same color. This produces a face-coloring of $\Sigma$.
\end{proof}

Similarly to Definition \ref{dfn:Unfold}, one defines a minimal face- and vertex-colored branched cover of a given triangulation. For this, take the set of vertex- and face-colored triangles $(\Delta, \phi, \epsilon)$, where $\epsilon \in \{w,b\}$, and glue a pair of adjacent colored triangles along their common edge if their vertex colors coincide and the face colors are opposite.


\subsection{The space of germs}
The platonic monodromy from Section \ref{sec:PlatMon} generates a branched cover over a triangulated surface, similarly to Section \ref{sec:Unfolding}. A closer look at the definition of a proper platonic coloring shows that $\Sigma$ and $\Sigma'$ can be interchanged. In particular, $\Sigma'$ might be any triangulated surface, not necessarily the boundary of a platonic solid. The resulting simplicial complex covers $\Sigma$, if $\Sigma'$ has no boundary, and vice versa.

\begin{dfn}
Let $\Sigma$ and $\Sigma'$ be two triangulated surfaces without boundary. Consider the set of all triples $\Delta = (\sigma, \sigma', \phi)$, where $\sigma \in \Sigma$ and $\sigma' \in \Sigma'$ are triangles, and $\phi \colon \sigma \to \sigma'$ is a bijection between their vertex sets. (In particular, $\Delta$ can be viewed as a triangle.)

Assume that $\Delta_1 = (\sigma_1, \sigma'_1, \phi_1)$ and $\Delta_2 = (\sigma_2, \sigma'_2, \phi_2)$ are such that the triangles of the same surface share an edge, and the restrictions of $\phi_1$ and $\phi_2$ agree:
\[
\sigma_1 \cap \sigma_2 = \tau, \quad \sigma'_1 \cap \sigma'_2 = \tau', \quad \phi_1|_{\tau} = \phi_2|_{\tau}
\]
Identify the triangles $\Delta_1$ and $\Delta_2$ along their common edge according to the map $\phi_1$ (or $\phi_2$). The resulting simplicial complex is called the \emph{space of germs} between $\Sigma$ and $\Sigma'$.
\end{dfn}

The name ``space of germs'' is chosen by analogy with germs of analytic functions: an affine isomorphism $\phi \colon \sigma \to \sigma'$ has a unique continuation across every edge of $\sigma$.

Note that the space $G(\Sigma, \Sigma')$ can be disconnected.

\begin{exl}
If $\Sigma'$ is isomorphic to $\Sigma$, then $G(\Sigma, \Sigma')$ contains a component isomorphic to $\Sigma$. Other components will be more complicated, unless $\Sigma$ has a flag-transitive symmetry group.
\end{exl}

\begin{exl}
Let $\Sigma$ and $\Sigma'$ be the boundaries of the tetrahedron and of the octahedron, respectively. Then $G(\Sigma, \Sigma')$ consists of two isomorphic components: a tetrahedron ``rolls transitively'' on the outside and on the inside of the octahedron. Each of the components is an equivelar surface of vertex degree $12$. It has $96$ faces, hence $144$ edges and $24$ vertices, and therefore genus $13$.
\end{exl}

\begin{thm}
Each component of the space of germs $G(\Sigma, \Sigma')$ is a simplicial branched cover over both $\Sigma$ and $\Sigma'$. Any triangulated surface that branched covers $\Sigma$ and $\Sigma'$ covers also one of the components of $G(\Sigma, \Sigma')$.
\end{thm}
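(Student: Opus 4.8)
The plan is to exhibit the two tautological projections and verify the branched-cover axioms by the standard principle that a covering of a punctured surface with finite monodromy extends over the punctures; the universal property is then obtained by writing down the comparison map directly from the two given branched covers.

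First I would introduce the simplicial projections
\[
p \colon G(\Sigma,\Sigma') \to \Sigma, \quad (\sigma,\sigma',\phi) \mapsto \sigma, \qquad p' \colon G(\Sigma,\Sigma') \to \Sigma', \quad (\sigma,\sigma',\phi) \mapsto \sigma',
\]
each of which carries a germ $\Delta$ homeomorphically onto $\sigma$ (resp.\ $\sigma'$). Fixing one component $G_0$, I would first check that $G_0$ is a closed surface: since $\Sigma$ and $\Sigma'$ have no boundary, every edge of a germ $\Delta=(\sigma,\sigma',\phi)$ has a \emph{unique} continuation across it, because there is a unique neighbouring triangle on each side in $\Sigma$ and in $\Sigma'$ and the agreeing restriction of $\phi$ determines the neighbouring germ; hence every edge of $G_0$ lies in exactly two germs. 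This unique-continuation property is precisely the statement that $p$ is a local homeomorphism off the vertices and that continuation of germs along an edge-path is the unique lift of that path, so $p$ restricts to an honest covering
\[
p \colon G_0 \setminus p^{-1}(\Vert(\Sigma)) \to |\Sigma| \setminus \Vert(\Sigma).
\]

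Next I would extend over the vertices. The monodromy of this covering around a vertex $v$ is the permutation of the fibre induced by pivoting germs once around $v$; there are finitely many germs, so this permutation has finite order, and each orbit is a cyclic cover of the punctured disc that closes up to a cone. Adjoining these cone points is exactly adjoining the vertices of $G_0$ lying over $v$, and the link of each such vertex is a single cyclic fan of germs, so $G_0$ is a surface and $p$ is a branched cover ramified only over $\Vert(\Sigma)$. Concretely, pivoting around $v$ also pivots around $v'=\phi(v)$ in lockstep, so a germ at the corner $v$ is governed by a pair of cyclic positions in $\mathbb{Z}/\deg v \times \mathbb{Z}/\deg v'$; the orbit of ``add $(1,\pm1)$'' has length $\operatorname{lcm}(\deg v,\deg v')$, giving branching index $\deg v'/\gcd(\deg v,\deg v')$ over $v$ (and this matches the tetrahedron/octahedron example). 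Verifying this local model — that the vertex links really are circles and the pivot has the claimed cyclic monodromy — is the \textbf{main technical obstacle} of the first half; $p'$ is then symmetric, interchanging $\Sigma$ and $\Sigma'$.

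For the universal property, suppose $X$ is a triangulated surface with simplicial branched covers $f \colon X \to \Sigma$ and $f' \colon X \to \Sigma'$. Since the ramification of a surface branched cover is discrete I may assume it lies in the vertices, so $f$ and $f'$ map each closed triangle homeomorphically onto a triangle and adjacent triangles to distinct adjacent triangles. I would define
\[
F \colon X \to G(\Sigma,\Sigma'), \qquad F(t) = \bigl(f(t),\, f'(t),\, (f'|_t)\circ(f|_t)^{-1}\bigr),
\]
the last entry being a vertex bijection on each triangle $t$. If $t_1,t_2$ share an edge $\tau$, then $f(t_1),f(t_2)$ share $f(\tau)$, likewise for $f'$, and the two germs $F(t_1),F(t_2)$ have agreeing $\phi$-restrictions on $f(\tau)$; hence they are glued in $G$ exactly as $t_1,t_2$ are glued in $X$. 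Thus $F$ is a well-defined simplicial map with $p\circ F=f$ and $p'\circ F=f'$, and its image lies in one component $G_0$. Finally I would note that $F(X)$ is closed under passing to edge-neighbours in $G$, so connectedness of $G_0$ forces $F(X)=G_0$; and $F$ is a local homeomorphism off the vertices (an affine isomorphism on each triangle, matching continuations across edges) and proper, hence a branched cover of $G_0$. The only delicate point is the standing convention that a simplicial branched cover is non-folding — its branch locus avoids edge interiors — which is exactly what licenses both the definition of $F$ and the edge-compatibility above.
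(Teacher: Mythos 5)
Your proposal is correct and follows essentially the same route as the paper: the two tautological projections $(\sigma,\sigma',\phi)\mapsto\sigma$ and $(\sigma,\sigma',\phi)\mapsto\sigma'$ for the first claim, and the comparison map $\zeta \mapsto (f(\zeta), f'(\zeta), f'\circ f^{-1})$ for the universal property. The paper declares the branched-cover property ``immediate from the definition''; you supply the local verification at the vertices (unique continuation across edges, cyclic links of length $\operatorname{lcm}(\deg v,\deg v')$), which is a welcome elaboration rather than a different argument.
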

\begin{proof}
The map $G(\Sigma, \Sigma') \to \Sigma$ defined by $(\sigma, \sigma', \phi) \mapsto \sigma$ is a branched cover, which is immediate from the definition.

Given two branched covers $f \colon Z \to \Sigma$ and $f' \colon Z \to \Sigma'$, put
\[
F \colon Z \to G(\Sigma, \Sigma'), \quad \zeta \mapsto (f(\zeta), f'(\zeta), f' \circ f^{-1})
\]
\end{proof}

\begin{rem}
In \cite[Section I.3]{Fisk77}, Fisk defined the ``product'' of two pure simplicial complexes, which is a quotient of the space of germs under the identification of some lower-dimensional simplices.

Stephen Wilson defined in \cite{Wil94} the \emph{parallel product} of maps on surfaces. In the case when the maps are triangulations, the parallel product becomes our space of germs.

In higher dimensions a similar operation of \emph{mixing} of two abstract regular polytopes was introduced by Peter McMullen and Egon Schulte in \cite{McMSch}.
\end{rem}

\subsection{Geometric aspect}
The proof of Theorem \ref{thm:kVert} from Section \ref{sec:PlatMon} can be given a geometric flavor.

\begin{proof}[Second proof of Theorem \ref{thm:kVert}]
Assume we have a triangulation of the sphere with all vertex degrees divisible by $k$ except two adjacent vertices $a$ and $b$. Make every triangle spherical with side lengths $\frac{2\pi}k$. (For $k=2$ such a triangle covers a hemisphere, with vertices equally spaced on a big circle.) The assumption on the vertex degrees implies that the total angles around all vertices except $a$ and $b$ are multiples of $2\pi$. Cut the sphere along the edge joining $a$ and $b$ and remove all interior vertices. The result is a surface $\overline{M}$ (non-compact and with boundary) equipped with a spherical metric. The holonomy representation \cite[Chapter 1.4]{CHK} of this metric structure is trivial, hence there exists a developing map (local isometry) $\dev \colon \bar M \to \Sph^2$.

\begin{figure}[ht]
\centering
\input{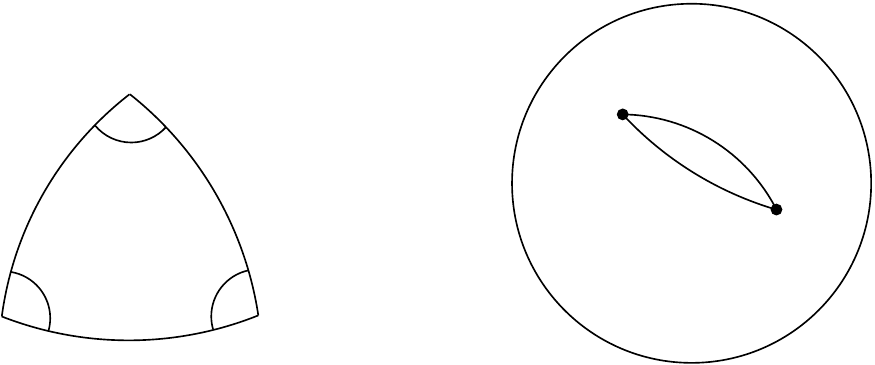_t}
\caption{Developing a surface with a spherical metric onto the sphere.}
\label{fig:ConjPoints}
\end{figure}

Look at the images of the vertices $a$ and $b$ under the developing map. Each side of the slit made along the edge $ab$ becomes a geodesic arc with the endpoints $\dev(a)$ and $\dev(b)$. These geodesic arcs are different because the degrees of $a$ and $b$ are not divisible by $k$. The length of each arc equals the side length $a_k$ of the equilateral spherical triangle with the angle $\frac{2\pi}k$. We have $a_k \le \frac{2\pi}3$; on the other hand two points on the sphere can be joined by two different geodesics only if they are antipodal. This contradiction shows that a triangulation with the given properties does not exist.
\end{proof}

The next theorem generalizes the well-known fact that there is no convex polyhedron with $12$ pentagonal faces and one hexagonal face.
\begin{thm}
\label{thm:Penta}
There is no triangulation of the sphere with the degrees of all vertices except one divisible by $5$.
\end{thm}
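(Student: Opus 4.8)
The plan is to run the same developing-map argument as in the second proof of Theorem~\ref{thm:kVert}, but to exploit that now there is only a \emph{single} exceptional vertex, so that the relevant punctured sphere is simply connected. Assume for contradiction that such a triangulation of $\Sph^2$ exists, and let $v_0$ be the unique vertex whose degree is not divisible by $5$. First I would make every triangle into the equilateral spherical triangle with all angles equal to $\frac{2\pi}{5}$ (the spherical image of a face of the icosahedron) and glue these along their edges, which all have the same length. This produces a spherical cone-metric on $\Sph^2$ whose cone points are the vertices of the triangulation: at a vertex of degree $d$ the cone angle is $d\cdot\frac{2\pi}{5}$, hence a multiple of $2\pi$ at every vertex except $v_0$, where it equals $\deg(v_0)\cdot\frac{2\pi}{5}\notin 2\pi\Z$.

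Next I would pass to the holonomy of this structure: the developing map defines a representation $\rho\colon\pi_1(\Sph^2\setminus\{\text{vertices}\})\to \mathrm{SO}(3)$, and for a small loop $\gamma_i$ around the vertex $v_i$ the element $\rho(\gamma_i)$ is a rotation by an angle $\equiv\deg(v_i)\cdot\frac{2\pi}{5}\pmod{2\pi}$; it is the identity precisely when $5\mid\deg(v_i)$. Thus $\rho(\gamma_i)=\mathrm{id}$ for every $i\neq 0$, while $\rho(\gamma_0)\neq\mathrm{id}$. Since the loops around the punctures of the sphere satisfy $\gamma_0\gamma_1\cdots=1$ (with suitable orientations), applying $\rho$ forces $\rho(\gamma_0)=\mathrm{id}$, a contradiction. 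Equivalently: because the cone angles away from $v_0$ are multiples of $2\pi$, the structure extends across those vertices as a branched cover, so $\rho$ factors through $\pi_1(\Sph^2\setminus\{v_0\})\cong 1$; there is then a global developing map $\dev\colon\Sph^2\setminus\{v_0\}\to\Sph^2$, and the loop around $v_0$, being contractible, must have trivial holonomy, which is impossible for a cone angle outside $2\pi\Z$.

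The step I expect to be the main obstacle is the bookkeeping that makes the ``good'' vertices drop out: one must argue that a cone point of angle $2\pi m$ contributes trivial holonomy and does not obstruct the developing map, morally because it is an honest branch point with local model $z\mapsto z^m$, so that the problem collapses onto the single puncture $v_0$. I would also emphasize why no cheaper argument works: Gauss--Bonnet here is automatically satisfied (it is equivalent to $\sum_v\deg v=6n-12$), and the resulting congruence $\deg(v_0)\equiv n-2\pmod 5$ can perfectly well be nonzero, so there is no arithmetic or parity obstruction; the contradiction must come from holonomy rather than from a curvature count. Finally I would remark that specializing to $\deg(v_0)=6$ recovers the classical fact that no convex polyhedron has exactly twelve pentagonal faces and one hexagonal face.
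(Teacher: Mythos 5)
Your proposal is correct and is essentially the paper's own argument: assign to each triangle the equilateral spherical metric with angles $\frac{2\pi}{5}$ and observe that the holonomy around the exceptional vertex is nontrivial, yet must be trivial because it equals the product of the (trivial) holonomies around the remaining vertices. The extra bookkeeping you supply about cone points of angle $2\pi m$ and the relation $\gamma_0\gamma_1\cdots=1$ is exactly the content the paper leaves implicit.
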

\begin{proof}
Assume that all vertex degrees except one are divisible by $5$. Make every triangle spherical equilateral with the angle $\frac{2\pi}5$. On one hand, the holonomy around the exceptional vertex is non-trivial; on the other hand it is trivial since it is the product of the holonomies around all other points.
\end{proof}
For example, there is no polyhedron made of one triangle and nine pentagons, and no polyhedron made of $13$ pentagons and one heptagon.

Again, instead of $5$ in Theorem \ref{thm:Penta} one can take any number $k$; but for $k \ne 5$ this is trivial because the sum of all vertex degrees equals $6n-12$.

\begin{rem}
Jendrol' and Jucovi{\v{c}} proved in \cite{JJ72} that there is no triangulation of the torus with all vertices of degree 6 except two of degrees $5$ and $7$. In \cite{IKRSS13} a new proof is given; it uses the holonomy of the geometric structure arising when all triangles are viewed as euclidean equilateral ones.
\end{rem}

\subsection{Odd subcomplex in higher dimensions}
\label{sec:HighDim}
Let $\Sigma$ be a $d$-dimensional triangulated manifold. Define the \emph{odd subcomplex} $\So$ of $\Sigma$ as the union of all $(d-2)$-simplices incident to an odd number of $d$-simplices. An \emph{even triangulation} is one whose odd subcomplex is empty.
Even triangulations of $d$-dimensional manifolds are studied in \cite{RubTil14} in the context of geometric topology.

The unfolding map $\wS \to \Sigma$ from Section \ref{sec:Unfolding} can be defined for a triangulated manifold of any dimension; it is ramified over the odd subcomplex.

While the unfolding of every triangulated surface is again a surface, this is no more true for manifolds of dimension $d \ge 3$. For example, the unfolding of the double tetrahedron contains points with neighborhoods homeomorphic to the suspension over the torus. For $\wS$ to be a manifold, $\So$ must be a submanifold of $\Sigma$. In particular, if $\dim \Sigma = 3$, then $\So$ must be a knot or a link. This case, with $\Sigma$ a triangulated $3$-sphere, was studied in \cite{IJ03}.

The following are some restrictions on the odd subcomplex.
\begin{thm}
\begin{enumerate}
\item For every pure simplicial complex $\Sigma$, the fundamental cycle of its odd subcomplex $\So$ represents a zero element in $H_{d-2}(M;\Z_2)$.

\item If $d \equiv 0$ or $3$ $(\mod 4)$, then $\sharp(\So)$ is even; if $d \equiv 1$ or $2$ $(\mod 4)$, then $\sharp(\So)$ has the same parity as $\sharp(\Sigma)$. Here $\sharp(\So)$ and $\sharp(\Sigma)$ denote the number of $(d-2)$- and $d$-faces in $\So$ and $\Sigma$, respectively.

\item Let $\tau \subset \So$, $\dim \tau = d-3$. Assume that there are only two odd faces $\sigma_1, \sigma_2 \supset \tau$. Then $\sigma_1$ and $\sigma_2$ are not contained in the same $(d-1)$-simplex.
\end{enumerate}
\end{thm}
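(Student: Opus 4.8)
The three items are essentially independent, so I would treat them separately. Throughout I take $\Sigma$ to be a closed combinatorial $d$-manifold (the setting of this section), so that the link of every $j$-simplex is a combinatorial $(d-j-1)$-sphere, and I work with $\Z_2$-coefficients throughout. Part (2) I would obtain by a double count of the incidence pairs $(\theta,\sigma)$ with $\theta$ a $(d-2)$-face, $\sigma$ a $d$-face, and $\theta\subset\sigma$. Counting by $\sigma$ gives $\binom{d+1}{2}\,\sharp(\Sigma)$ such pairs, since a $d$-simplex has $\binom{d+1}{2}$ faces of dimension $d-2$; counting by $\theta$ and reducing mod $2$ gives $\sharp(\So)$, since exactly the $\theta$ with an odd number of cofaces survive. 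Hence $\sharp(\So)\equiv\binom{d+1}{2}\sharp(\Sigma)\pmod 2$, and since $\binom{d+1}{2}=\tfrac{d(d+1)}2$ is odd precisely for $d\equiv 1,2\pmod 4$, the four cases of the statement fall out. (This step uses nothing about manifolds.)

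For (1) the plan is to exhibit an explicit bounding chain. The key observation is that for a $(d-2)$-face $\theta$ the link $\mathrm{lk}(\theta)$ is a circle whose vertices are the $(d-1)$-faces above $\theta$ and whose edges are the $d$-faces above $\theta$; in particular these two numbers are equal, and $\theta\in\So$ exactly when this common number is odd. I then claim that over $\Z_2$
\[
\partial\Big(\textstyle\sum_{\rho}\rho\Big)=\sum_{\theta\in\So}\theta ,
\]
where $\rho$ ranges over all $(d-1)$-faces. Indeed the coefficient of $\theta$ on the left is the number of $(d-1)$-faces above $\theta$, i.e. the number of vertices of $\mathrm{lk}(\theta)$, which equals its number of edges, i.e. the number of $d$-faces above $\theta$; its parity is exactly the indicator of $\theta\in\So$. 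Thus the fundamental cycle of $\So$ is a boundary and represents $0$ in $H_{d-2}(M;\Z_2)$. (Dually, in the block decomposition the Poincar\'e-dual $2$-cochain of $\So$ is the coboundary of the $1$-cochain equal to $1$ on every dual edge.) I would stress that closedness is what makes $\mathrm{lk}(\theta)$ a cycle; for a complex with boundary the left-hand side need not even be a cycle.

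For (3) the natural move is to pass to the link of $\tau$. Since $\dim\tau=d-3$, the complex $L:=\mathrm{lk}(\tau)$ is a triangulated $2$-sphere. Its vertices are the $(d-2)$-faces above $\tau$, its edges the $(d-1)$-faces above $\tau$, and its triangles the $d$-faces above $\tau$; moreover, in a closed surface the degree of a vertex of $L$ equals its number of incident triangles, which is the number of $d$-faces above the corresponding $(d-2)$-face. Hence a $(d-2)$-face $\sigma_i\supset\tau$ is odd in $\Sigma$ iff the vertex it determines has odd degree in $L$, and a common $(d-1)$-face of $\sigma_1,\sigma_2$ corresponds precisely to an edge of $L$ joining those two vertices (namely $\sigma_1\cup\sigma_2$). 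The hypothesis therefore says that $L\cong\Sph^2$ carries exactly two vertices of odd degree, and the desired conclusion is exactly that they are non-adjacent. This is Theorem~\ref{thm:OddNeighb} applied to $L$; for $d=2$ (so $\tau=\varnothing$ and $L=\Sigma$) it is that theorem verbatim, which is why (3) is its higher-dimensional incarnation.

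The only genuinely new work is in (1): spotting the bounding chain $\sum_\rho\rho$ (equivalently, the dual coboundary) and being honest that the argument rests on each $(d-1)$-face lying in exactly two $d$-faces, so that the links of $(d-2)$-faces are circles. Parts (2) and (3) are then a count and a one-line reduction to the already-proved sphere case; the one point to watch in (3) is that $L$ must be a genuine $2$-sphere (whence the combinatorial-manifold assumption), since for a higher-genus link two odd vertices can be adjacent, exactly as in the torus example.
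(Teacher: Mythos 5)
Your proposal is correct and follows essentially the same route as the paper: part (1) via the observation that the fundamental cycle of $\So$ is the $\Z_2$-boundary of the sum of all $(d-1)$-faces, part (2) via the double count of incidences between $d$- and $(d-2)$-faces using $\binom{d+1}{2}=\frac{d(d+1)}2$, and part (3) by passing to the link of $\tau$ and invoking Theorem \ref{thm:OddNeighb}. The only difference is that you spell out the justifications (links of $(d-2)$-faces being circles, the dictionary between faces above $\tau$ and the simplices of its link) that the paper leaves implicit.
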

\begin{proof}
The complex $\So$ is the $\Z_2$-boundary of the $(d-1)$-skeleton of $\Sigma$.

For the second part, count the incidences between $d$- and $(d-2)$-faces of $\Sigma$. Every $d$-simplex has $\frac{d(d+1)}2$ faces of dimension $d-2$. This number is even if and only if $d \equiv 0$ or $3$ $(\mod 4)$. Therefore
\begin{multline*}
\sharp(\So) \equiv \sum_{\dim\sigma = d-2} i_\sigma \\
= \frac{d(d+1)}2 \sharp(\Sigma) \equiv
\begin{cases}
0\ (\mod 2), &\text{if } d \equiv 0 \text{ or } 3\ (\mod 4)\\
\sharp(\Sigma)\ (\mod 2), &\text{if } d \equiv 1 \text{ or } 2\ (\mod 4)
\end{cases}
\end{multline*}

The third part of the theorem follows by considering the link of $\tau$. It is a triangulated $2$-sphere whose odd subcomplex consists of two points $\sigma_1 \setminus \tau$ and $\sigma_2 \setminus \tau$. By Theorem \ref{thm:OddNeighb}, these points are not adjacent, which means that $\sigma_1$ and $\sigma_2$ don't lie in the same $(d-1)$-simplex.
\end{proof}

Every codimension $2$ submanifold $N \subset M$ which is a boundary $\mod 2$ can be realized as the odd subcomplex of some triangulation of $M$, see \cite[Proposition 5.1.3]{IJ03}.

\section{Acknowledgments}
The author thanks Barry Monson for pointing out the references \cite{McMSch} and~\cite{Wil94}, and an unknown referee for drawing my attention to the articles of Steve Fisk.

The article was written during the author's stay at the Pennsylvania State University as a Shapiro visitor.

\end{document}